\newcommand\g{{\mathfrak g}}
\newcommand\h{{\mathfrak h}}
\newcommand\codim{\operatorname{codim}}
\newcommand\im{\operatorname{im}}
\newcommand\Spec{\operatorname{Spec}}
\newcommand\Quot{\operatorname{Quot}}
\newcommand\K{\mathbb K}
\newcommand\Q{\mathbb Q}
\newcommand\N{\mathbb N}
\newcommand\Z{\mathbb Z}
\newcommand\D{\mathcal D}
\newcommand\Sp{\mathop{\rm Sp}\nolimits}
\renewcommand{\mod}{\mathop{\rm mod}\nolimits}
\newcommand{\ZP}{\operatorname{Z}}
\newcommand{\ZA}{\mathcal{Z}}
\newcommand\quo{/\!/}
\newtheorem{Thm}{Theorem}[section]
\newtheorem{Prop}[Thm]{Proposition}
\newtheorem{Lem}[Thm]{Lemma}
\theoremstyle{definition}
\newtheorem{Ex}[Thm]{Example}
\newtheorem{defi}[Thm]{Definition}
\numberwithin{equation}{section}
\numberwithin{table}{section} \oddsidemargin=0cm
\author{Ivan Losev}
\title{Lifting central invariants of quantized Hamiltonian actions}
\thanks{{\it Key words and phrases}: reductive groups,  Hamiltonian actions, central
invariants, quantization}
\thanks{{\it 2000 Mathematics Subject Classification.}
53D20, 53D55, 14R20} \thanks{Partially supported by A. Moebius
foundation}
\begin{document}
\begin{abstract}
Let $G$ be a connected reductive group over an algebraically closed
field $\K$ of characteristic $0$, $X$ an affine symplectic variety
equipped with a Hamiltonian action of $G$. Further, let $*$ be a
$G$-invariant Fedosov star-product on $X$ such that the Hamiltonian
action is quantized. We establish an isomorphism between  the center
of the quantum algebra $\K[X][[\hbar]]^G$ and the algebra of formal
power series with coefficients in the Poisson center of $\K[X]^G$.
\end{abstract}
\maketitle
\section{Introduction}\label{SECTION_intro}
In this paper we establish a relation between the centers of certain
Poisson algebras and their quantizations.  Poisson algebras in
interest are invariant algebras for Hamiltonian actions of reductive
algebraic groups on  affine symplectic varieties (the definition of
a Hamiltonian action will be given in the beginning of Section
\ref{SECTION_Hamil}). All varieties and groups are defined over an
algebraically closed field $\K$ of characteristic $0$.

Until further notices $G$ is a connected reductive algebraic group,
$X$ is an affine symplectic variety equipped with  a Hamiltonian
action of $G$. Construct a Fedosov star-product on $X$ and suppose
that the Hamiltonian action can be quantized (all necessary
definitions concerning star-products and quantized Hamiltonian
actions are given in Section \ref{SECTION_quant}). Let
$U^\wedge_\hbar(\g)$ stand for the completed homogeneous universal
enveloping algebra of $\g$ (see the end of Section 3). We have a
natural algebra homomorphism $U^\wedge_\hbar(\g)\rightarrow
\K[X][[\hbar]]$ mapping $\xi\in\g$ to $\widehat{H}_\xi$.

Let $\ZP(\bullet),\ZA(\bullet)$ stand for the center of  Poisson and
associative algebras, respectively. In a word, the main result of
this paper is that the algebras
$\ZP(\K[X]^G)[[\hbar]],\ZA(\K[X][[\hbar]]^G)$ are isomorphic. In
fact, a more precise statement holds. Let us state it.

 We have (see Section
\ref{SECTION_quant}) the following commutative diagram, where the
vertical arrow is an isomorphism of topological
$\K[[\hbar]]$-algebras.

\begin{equation}\label{eq:1}
\begin{picture}(60,30)
\put(2,22){$S(\g)^\g[[\hbar]]$} \put(2,2){$\ZA(U^\wedge_\hbar(\g))$}
\put(35,13){$S(\g)^\g$}
\put(8,20){\vector(0,-1){13}}\put(10,13){{\tiny $\cong$}}
\put(20,23.5){\vector(3,-2){13}} \put(20,4){\vector(3,2){13}}
\end{picture}
\end{equation}

 The main result of this paper is the
following theorem.

\begin{Thm}\label{Thm:1.1}
Identify $S(\g)^\g[[\hbar]],\ZA(U^\wedge_\hbar(\g))$ by means of any
topological $\K[[\hbar]]$-algebra isomorphism making the previous
diagram commutative.  Then there exists an isomorphism
$\ZP(\K[X]^G)[[\hbar]]\rightarrow \ZA(\K[X][[\hbar]]^G)$ of
$\K[[\hbar]]$-algebras such that the following diagram is
commutative.

\begin{picture}(100,30)
\put(2,22){$S(\g)^\g[[\hbar]]$} \put(2,2){$\ZA(U^\wedge_\hbar(\g))$}
\put(35,22){$\ZP(\K[X]^G)[[\hbar]]$}\put(36,2){$\ZA(\K[X][[\hbar]]^G)$}\put(73,13){$\ZP(\K[X]^G)$}
\put(50,20){\vector(0,-1){13}}
\put(61,23){\vector(3,-2){11}}\put(61,5){\vector(3,2){11}}
\put(8,20){\vector(0,-1){13}}\put(10,13){{\tiny $\cong$}}
\put(20,23.5){\vector(1,0){14}} \put(20,4){\vector(1,0){14}}
\end{picture}
\end{Thm}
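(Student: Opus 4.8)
The plan is to construct the desired isomorphism as the inverse of a reduction-mod-$\hbar$ map, thereby reducing the whole theorem to a lifting problem. First I would define $\rho\colon \ZA(\K[X][[\hbar]]^G)\to\ZP(\K[X]^G)$ by $z\mapsto z\bmod\hbar$. This is well defined: for a Fedosov star-product one has $z*f-f*z=\hbar\{\bar z,\bar f\}+O(\hbar^2)$, so centrality of $z$ forces $\{\bar z,\bar f\}=0$ for every $f$, and since (as a topological module) $\K[X][[\hbar]]^G=\K[X]^G[[\hbar]]$, the leading term $\bar f$ ranges over all of $\K[X]^G$; hence $\bar z\in\ZP(\K[X]^G)$. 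The center is a closed, hence $\hbar$-adically complete and separated, $\K[[\hbar]]$-subalgebra of the $\hbar$-torsion-free algebra $\K[X][[\hbar]]$, and one checks directly that $\ker\rho=\hbar\,\ZA(\K[X][[\hbar]]^G)$. By the structure theory of complete, separated, $\hbar$-torsion-free $\K[[\hbar]]$-modules this already yields a module isomorphism $\ZA(\K[X][[\hbar]]^G)\cong(\image\rho)[[\hbar]]$, so the whole theorem reduces to showing that $\rho$ is surjective, i.e. that $\image\rho=\ZP(\K[X]^G)$.

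The core of the proof is therefore the lifting statement: every $c\in\ZP(\K[X]^G)$ is the reduction of some central $z$. I would build $z=c+\hbar c_1+\hbar^2 c_2+\cdots$ inductively, requiring at stage $n$ that $\frac1\hbar[z^{(n-1)},f]_*\equiv 0\pmod{\hbar^{n}}$ for all invariant $f$. Writing the $\hbar^{n}$-coefficient as $O_n(f)$, the Jacobi and Leibniz identities for the bracket $\frac1\hbar[\cdot,\cdot]_*$ show that $O_n$ is a Poisson derivation of $\K[X]^G$ (a derivation of both the commutative product and the bracket), and the obstruction to choosing $c_n$ with $\{c_n,\cdot\}=-O_n$ is exactly the class of $O_n$ in the first Poisson cohomology $\operatorname{HP}^1(\K[X]^G)$. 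The single hardest point of the argument is to show that these particular classes vanish. Here I would try to exploit three ingredients at once: reductivity of $G$, which via the Reynolds operator should let one compute the subspace of $\operatorname{HP}^1(\K[X]^G)$ in which the $O_n$ live by descent from $X$; the fact that $X$ is symplectic, so that on $X$ itself Poisson cohomology is de Rham cohomology and the surviving degree-one classes are governed only by the (trivial) topology of the Fedosov model; and the quantized comoment map $\xi\mapsto\widehat{H}_\xi$, which realizes the infinitesimal $G$-action as the inner derivations $\frac1\hbar\ad\widehat{H}_\xi$ and hence identifies $\K[X][[\hbar]]^G$ with the centralizer of the image of $U^\wedge_\hbar(\g)$.

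Granting surjectivity, it remains to upgrade the module isomorphism to an algebra isomorphism and to match the diagram. Since $\ZA(\K[X][[\hbar]]^G)$ is commutative, the deformation of $\ZP(\K[X]^G)$ that it defines carries the zero induced Poisson bracket, so its isomorphism class is governed by the commutative (Harrison / Andr\'e--Quillen) deformation cohomology of $\ZP(\K[X]^G)$; I would trivialize it by choosing the lifts $c\mapsto z$ of a generating set of $\ZP(\K[X]^G)$ compatibly and checking that the defining relations lift, which the inductive construction of the previous paragraph can be arranged to achieve, the commutativity of the center making the bookkeeping manageable. Finally the two triangles and the square commute by construction: the left vertical arrow is the given isomorphism $S(\g)^\g[[\hbar]]\cong\ZA(U^\wedge_\hbar(\g))$, the slanted arrows are reductions mod $\hbar$, and the horizontal arrows are the moment-map pullback $\mu^{*}$ (extended $\hbar$-linearly) on top and the comoment map on the bottom. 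On the subalgebra $\mu^{*}(S(\g)^\g)$ of collective Casimirs the lift is forced to coincide with the image of $\ZA(U^\wedge_\hbar(\g))$, which is already central by the Duflo--Harish-Chandra identification, so the obstructions vanish there automatically and compatibility with $\mu^{*}$ propagates through the diagram; the remaining classes are exactly those I expect to kill by the invariant/symplectic vanishing above, which is where I anticipate the main difficulty.
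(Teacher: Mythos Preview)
Your outline has the right overall shape, but the step you yourself flag as ``the main difficulty'' is a genuine gap, and the ingredients you list do not close it. The obstruction at stage $n$ is a derivation $O_n$ of the Poisson algebra $\K[X]^G$, and you need it to be Hamiltonian. None of your three tools gives this. The identification of Poisson with de~Rham cohomology holds on the \emph{symplectic} variety $X$, not on $X\quo G$, which is in general singular with degenerate Poisson structure; the Reynolds operator splits $\K[X]^G\hookrightarrow\K[X]$ only as a module map, not as a Poisson map, so a derivation of $\K[X]^G$ has no canonical extension to a Hamiltonian vector field on $X$; and the quantum comoment map only tells you that derivations coming from $\g$ are inner, whereas $O_n$ has no reason to lie in the image of $\g$. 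There is a second gap of the same flavor in your last paragraph: even if $\rho$ is surjective, triviality of the resulting commutative deformation is governed by $\operatorname{Harr}^2$ of $\ZP(\K[X]^G)$, and ``checking that the defining relations lift'' is the problem itself, not a method for solving it.

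The paper's proof bypasses both cohomological obstructions by exploiting a structural fact you do not use: by Proposition~\ref{Prop:1.11}, $\ZP(\K[X]^G)$ is (the regular functions on an open piece of the spectrum of) the integral closure $A$ of $\mu^*(S(\g)^\g)$. The image of $S(\g)^\g$ already lifts to the quantum side via the quantum comoment map, so for $f\in A$ satisfying a monic $P$ over this subalgebra one lifts $f$ by a Hensel/Newton iteration on the locus $\{P'(f)\neq 0\}$; uniqueness of the Hensel lift is what makes the algebra relations lift automatically and replaces your Harrison argument. The remaining work is to show this lift is everywhere regular and actually central, which is Proposition~\ref{Prop:2.3}: one passes to formal neighborhoods of closed orbits, uses the local product decomposition $X'\cong X_0\times V$ with $X_0$ coisotropic (Proposition~\ref{Prop:1.12}), and invokes the elementary Lemma~\ref{Lem:2.4} that an element of $\hat W_V(B)$ integral over $B[[\hbar]]$ already lies in $B[[\hbar]]$. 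In short, the paper trades the Poisson-cohomology vanishing you would need for integrality over the collective Casimirs plus Hensel lifting and a local normal-form argument.
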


Let us describe some results  related to ours.

In \cite{Kontsevich} Kontsevich quantized the algebra of smooth
functions on an arbitrary real Poisson manifold, say $M$. His
construction yields an isomorphism
$\ZP(C^\infty(M))[[\hbar]]\rightarrow \ZA(C^\infty(M)[[\hbar]])$.

Also let us mention  the Duflo conjecture, \cite{Duflo}. Let $G$ be
a Lie group, $H$ its closed subgroup. Let $\g,\h$ denote the Lie
algebras of $G,H$, respectively.  Further, choose $\lambda\in
(\h/[\h,\h])^*$. Set $\h^\lambda:=\{\xi+\langle\lambda,\xi\rangle,
\xi\in\h\}$. The algebra $(S(\g)/S(\g)\h^\lambda)^\h$ has the
natural Poisson structure induced from the Poisson structure on
$S(\g)$. Similarly, the space $(U(\g)/U(\g)\h^\lambda)^\h$ has a
natural structure of the associative algebra. The Duflo conjecture
states that for a certain element $\delta\in (\h/[\h,\h])^*$ the
filtered algebras
$\ZP\left((S(\g)/S(\g)\h^\lambda)^\h\right),\ZA\left((U(\g)/U(\g)\h^{\lambda-\delta})^\h\right)$
are isomorphic. Of course, the Duflo conjecture makes sense for
algebraic groups instead of Lie groups. Note, however, that
$(U(\g)/U(\g)\h^{\lambda-\delta})^\h$, in general, cannot be
considered as a quantization of $(S(\g)/S(\g)\h^\lambda)^\h)$
because the associated graded of the former does not necessarily
coincide with the latter.

One of the most significant partial results related to the Duflo
conjecture is Knop's construction of the Harish-Chandra isomorphism
for reductive group actions, \cite{Knop4}. Namely, let $X_0$ be a
smooth (not necessarily affine) variety and $G$ be a connected
reductive group (everything over $\K$). Set $X:=T^*X_0$. Let
$\D(X_0)$ denote the algebra of linear differential operators on
$X_0$. Knop constructed a certain monomorphism $\K[X]\cap
\ZP(\K(X)^G)\rightarrow \ZA(\D(X_0)^G)$, which becomes the usual
Harish-Chandra homomorphism when $X_0=G$. We remark that it is not
clear whether $\K[X]\cap \ZP(\K(X)^G)=\ZP(\K[X]^G)$ and whether the
image of the Harish-Chandra homomorphism coincides with whole
$\ZA(\D(X_0)^G)$. However, there is a  situation when both claims
hold:  $X_0$ is affine. We derive the existence of a weaker version
of the Harish-Chandra homomorphism in Section \ref{SECTION_Appl}.
This gives an alternative proof of the Duflo conjecture in the case
when both $G,H$ are reductive.

There are some other results related to the Duflo conjecture, see,
for example, \cite{Rybnikov},\cite{Torossian}.

Now let us describe the content of the paper. Sections
\ref{SECTION_Hamil},\ref{SECTION_quant} are preliminary. In the
former we review the definition of a Hamiltonian action in the
algebraic setting. Besides, we state there two results on
Hamiltonian actions obtained in \cite{alg_hamil},\cite{fibers} to be
used in the proof of Theorem \ref{Thm:1.1}. In Section
\ref{SECTION_quant} we review Fedosov's quantization in the context
of algebraic varieties and quantization of Hamiltonian actions.

Section \ref{SECTION_main} is the central part of this paper, where
Theorem \ref{Thm:1.1} is proved. At first, we prove Theorem
\ref{Thm:2.1}, which may be thought as a weaker version of Theorem
\ref{Thm:1.1}. In that theorem we establish an isomorphism with
desired properties between the algebras $A[[\hbar]],A_\hbar$, where
$A$ (resp., $A_\hbar$) is the integral closure of the image of
$S(\g)^\g$ in $\ZP(\K[X]^G)$ (resp., of the image of
$U^\wedge_\hbar(\g)^\g$ in $\ZA(\K[X][[\hbar]]^G)$). In its turn,
Theorem \ref{Thm:2.1} is derived from Proposition \ref{Prop:2.3},
which examines the integral closure of the image of
$U^\wedge_\hbar(\g)^\g$ in $\K[X^0][[\hbar]]^G$ for certain open
subvarieties $X^0\subset X$.

Finally, in Section \ref{SECTION_Appl} we discuss some applications
of Theorems \ref{Thm:1.1},\ref{Thm:2.1}. In particular, we give an
alternative proof of (a weaker form of) Knop's result cited above.
\section{Hamiltonian actions}\label{SECTION_Hamil}
In this section $G$ is a reductive algebraic group and $X$ is a
variety equipped with a regular symplectic form $\omega$ and an
action of $G$ by symplectomorphisms. Let $\{\cdot,\cdot\}$ denote
the Poisson bracket on $X$.

To any element $\xi\in\g$ one assigns the velocity vector field
$\xi_*$. Suppose  there is a linear map $\g\rightarrow \K[X],
\xi\mapsto H_\xi,$ satisfying the following two conditions:
\begin{itemize}
\item[(H1)] The map $\xi\mapsto H_\xi$ is $G$-equivariant.
\item[(H2)] $\{H_\xi,f\}=L_{\xi_*}f$ for any rational function $f$ on $X$ (here $L_{\bullet}$
denotes the Lie derivative).
\end{itemize}

\begin{defi}\label{Def:2.1.1}
The action  $G:X$ equipped with a linear map $\xi\mapsto H_\xi$
satisfying (H1),(H2) is said to be {\it Hamiltonian} and $X$ is
called a Hamiltonian $G$-variety. The functions $H_\xi$ are said to
be the {\it hamiltonians} of $X$.
\end{defi}

For a Hamiltonian action $G:X$ we define the morphism
$\mu:X\rightarrow \g^*$ (called the {\it moment map}) by the formula
\begin{equation*}
\langle \mu(x),\xi\rangle= H_{\xi}(x),\xi\in\g,x\in X.
\end{equation*}



Till the end of the section $X$ is affine.  Now we are going to
describe the algebra $\ZP(\K[X]^G)$. Let $\psi$ be the composition
of $\mu:X\rightarrow \g^*$ and the quotient morphism
$\g^*\rightarrow \g^*\quo G$. Denote by $A$ the integral closure of
$\psi^*(\K[\g^*]^\g)$ in $\K[X]^G$. Set $C_{G,X}:=\Spec(A)$ . We
have the decomposition $\psi=\tau\circ\widetilde{\psi}$, where
$\widetilde{\psi}$ is the dominant $G$-invariant morphism
$X\rightarrow C_{G,X}$ induced by the inclusion $A\hookrightarrow
\K[X]$ and $\tau$ is a finite morphism.

\begin{Prop}\label{Prop:1.11}
The morphism $\widetilde{\psi}$ is open. The equality
$\ZP(\K[X]^G)=\widetilde{\psi}^*(\K[\im\widetilde{\psi}])$ holds.
\end{Prop}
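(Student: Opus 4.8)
The plan is to treat the two assertions together and to isolate the genuinely hard content in the structural results on fibers of the invariant moment map recalled in Section~\ref{SECTION_Hamil}. I begin with the easy inclusion $\widetilde\psi^*(\K[\im\widetilde\psi])\subseteq\ZP(\K[X]^G)$. For $p\in S(\g)^\g=\K[\g^*]^\g$ the invariant function $\psi^*p=\mu^*p$ Poisson-commutes with every $g\in\K[X]^G$: by (H2) the Hamiltonian vector field of $\mu^*p$ is at each point a velocity field $\xi_*$ for some $\xi\in\g$, and velocity fields annihilate invariants. Hence $\psi^*(S(\g)^\g)\subseteq\ZP(\K[X]^G)$. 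The Poisson center is integrally closed in the domain $\K[X]^G$: if $a\in\K[X]^G$ satisfies a minimal monic relation $P(a)=0$ with coefficients in $\ZP(\K[X]^G)$, then applying $\{\,\cdot\,,g\}$ gives $P'(a)\{a,g\}=0$, and $P'(a)\neq0$ in characteristic $0$, so $\{a,g\}=0$. Therefore $A\subseteq\ZP(\K[X]^G)$. Finally, any $h\in\K[\im\widetilde\psi]\subseteq\K(C_{G,X})=\operatorname{Frac}A$ is a quotient of elements of $A$; its pullback $\widetilde\psi^*h$ is regular on $X$ (as $\im\widetilde\psi$ is the image) and Poisson-central off the polar locus of the denominator, hence Poisson-central everywhere by density.

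Next I prove openness. By the results of \cite{alg_hamil},\cite{fibers} recalled in Section~\ref{SECTION_Hamil}, $\widetilde\psi$ is equidimensional, every nonempty fiber having dimension $\dim X-\dim C_{G,X}$. Since $C_{G,X}=\Spec A$ is normal, being the spectrum of an integrally closed algebra, and $\widetilde\psi$ is a dominant equidimensional morphism of finite type to a normal, hence geometrically unibranch, base, it is open by the standard openness criterion for equidimensional morphisms. Write $U:=\im\widetilde\psi$, an open subvariety of $C_{G,X}$ with $\widetilde\psi^{-1}(U)=X$.

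For the remaining inclusion $\ZP(\K[X]^G)\subseteq\widetilde\psi^*(\K[U])$ I first identify $\ZP(\K(X)^G)=\operatorname{Frac}A$. The inclusion $\supseteq$ comes from the previous paragraph. For $\subseteq$, I use that the generic fibers of $\widetilde\psi$ are irreducible (again from Section~\ref{SECTION_Hamil}), so that $\operatorname{Frac}A=\K(C_{G,X})$ is algebraically closed in $\K(X)$, and a fortiori in $\K(X)^G$; combined with the dimension count $\td_\K\ZP(\K(X)^G)=\dim C_{G,X}=\td_\K\operatorname{Frac}A$ supplied by the same results, the extension $\ZP(\K(X)^G)/\operatorname{Frac}A$ is algebraic and hence trivial. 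Thus every $f\in\ZP(\K[X]^G)$ equals $\widetilde\psi^*h$ for a unique $h\in\K(C_{G,X})$, and it remains to see $h\in\K[U]$. As $C_{G,X}$ is normal it suffices to exclude poles of $h$ along prime divisors meeting $U$. If $h$ had a pole along such a divisor $D$, then by equidimensionality $\widetilde\psi^{-1}(D)$ would contain a prime divisor $D'$ of $X$ dominating $D$, and the associated valuation would give $v_{D'}(f)<0$, contradicting regularity of $f$ on the smooth variety $X$. Hence $h\in\K[U]$ and $f\in\widetilde\psi^*(\K[U])$.

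The main obstacle is entirely in the imported geometric input: the equidimensionality and generic irreducibility of the fibers of $\widetilde\psi$, and the transcendence-degree identity $\td_\K\ZP(\K(X)^G)=\dim C_{G,X}$, i.e.\ that a generic fiber of $\widetilde\psi$ reduces, modulo $G$, to (the closure of) a single symplectic leaf. Granting these results of \cite{alg_hamil},\cite{fibers}, the rest is commutative algebra on the normal variety $C_{G,X}$ together with the openness criterion for equidimensional maps.
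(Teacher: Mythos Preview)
Your argument is correct and aligns with the paper's own proof, which simply cites Theorem~1.2.9(1) and Proposition~5.9.1(2) of \cite{alg_hamil} as black boxes; you have in effect unpacked that citation, deriving openness from equidimensionality of $\widetilde\psi$ together with normality of $C_{G,X}$, and deriving the description of $\ZP(\K[X]^G)$ from $\ZP(\K(X)^G)=\operatorname{Frac}A$ plus a divisorial pole argument. One small inaccuracy of presentation: the inputs you invoke (equidimensionality, generic irreducibility of fibers, and the transcendence-degree identity $\td_\K\ZP(\K(X)^G)=\dim C_{G,X}$) are not actually ``recalled in Section~\ref{SECTION_Hamil}'' of this paper, which only states Propositions~\ref{Prop:1.11} and~\ref{Prop:1.12}; they must be cited directly from \cite{alg_hamil}.
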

\begin{proof}
This follows from \cite{alg_hamil}, assertion 1 of Theorem 1.2.9 and
assertion 2 of Proposition 5.9.1.
\end{proof}

Now we are going to establish a certain decomposition result. Namely, we
locally decompose $X$ into the product of a vector space and of a
{\it coisotropic} Hamiltonian variety. Recall that a Hamiltonian
$G$-variety $X_0$ is said to be coisotropic if the Poisson field
$\ZP(\K(X)^G)$ is commutative or equivalently is algebraic over $A$.
Let $\pi$ denote the quotient morphism $X\rightarrow X\quo G$.

\begin{Prop}\label{Prop:1.12}
There is an open subset $Z^0\subset X\quo G$ such that
$\codim_{X\quo G}(X\quo G)\setminus Z^0\geqslant 2$ and for any
point $x\in X$ with $\pi(x)\in Z^0$ and closed $G$-orbit the
following condition holds:
\begin{itemize}
\item[($\clubsuit$)] There is a vector space $V$, a coisotropic
Hamiltonian $G$-variety $X_0$ and a point $x'\in
X_0\times\{0\}\subset X':=X_0\times V$ such that $X_0$ is a
homogeneous vector bundle over $Gx'$ and there is a Hamiltonian
isomorphism $\rho:X^\wedge_{Gx}\rightarrow X'^\wedge_{Gx'}$
("Hamiltonian" means that $\rho$ is a $G$-equivariant
symplectomorphism respecting the hamiltonians).
\end{itemize}
\end{Prop}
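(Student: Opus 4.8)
The plan is to reduce the global assertion to a local analysis near a single closed orbit via the algebraic symplectic slice theorem, and then to control the locus over which that local picture degenerates. Fix $x\in X$ with closed orbit, so that $H:=G_x$ is reductive. Choose an $H$-stable complement $\m$ to $\h$ in $\g$ and identify $\m^*$ with the annihilator $\h^{\perp}\subset\g^*$. Form the symplectic slice $V_x:=(\g_* x)^{\skewperp}/\bigl((\g_* x)^{\skewperp}\cap\g_* x\bigr)$, a symplectic $H$-module carrying its own $H$-moment map. The Marle--Guillemin--Sternberg theorem in the formal algebraic setting (established in \cite{alg_hamil}) then provides a $G$-equivariant symplectomorphism respecting the hamiltonians between $X^\wedge_{Gx}$ and the formal neighborhood of the zero section in the model $Y:=G\times_H(\m^*\oplus V_x)$. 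Thus it suffices to produce the decomposition $(\clubsuit)$ for $Y$.

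The second step is to split the model. I would decompose the symplectic $H$-module $V_x=N\oplus W$, where $W$ is a symplectic $H$-submodule that is the restriction $V|_H$ of a symplectic $G$-module $V$ and that absorbs the entire non-coisotropic direction of $Y$, while the complementary symplectic $H$-module $N$ is chosen so that the homogeneous vector bundle $X_0:=G\times_H(\m^*\oplus N)$ is coisotropic, i.e. $\ZP(\K(X_0)^G)$ is commutative. Using the canonical $G$-isomorphism $G\times_H(\m^*\oplus N\oplus W)\cong\bigl(G\times_H(\m^*\oplus N)\bigr)\times V$, which is valid precisely because $W$ extends to the $G$-module $V$, one obtains $Y\cong X_0\times V$ with $X_0$ a coisotropic homogeneous vector bundle over the base orbit $Gx'$ and $x'$ the image of the base point. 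This yields $(\clubsuit)$ pointwise; that the splitting $V_x=N\oplus W$ carrying all of the corank into a $G$-liftable summand $W$ exists is exactly the condition that must be arranged by restricting to a good locus, and it is the source of the remaining work.

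The final, and hardest, step is the genericity and the codimension estimate. The data $(H,V_x)$, and hence the splitting above, vary with $x$, so I would take $Z^0\subset X\quo G$ to be the image of the union of orbit-type strata on which $H$ and the $H$-module $V_x$ are locally constant and on which the decomposition $V_x=N\oplus W$ (together with the $G$-module $V$) can be made uniform; equivalently, $Z^0$ is the set where $\widetilde{\psi}$ has its generic local structure, which is open by Proposition \ref{Prop:1.11}. The crux is proving $\codim_{X\quo G}(X\quo G)\setminus Z^0\geqslant 2$. I expect this to be the main obstacle: in a general reductive quotient the non-principal Luna strata can have codimension $1$, so one must exploit the symplectic rigidity of the situation. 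The extra input is that the reductivity of $H$ together with the nondegeneracy of $\omega$ on the slice $V_x$ constrains how the isotropy and the symplectic slice representation may jump, and a codimension-one jump turns out to be incompatible with the pairing structure imposed by the form on the normal data. This symplectic codimension-two argument, which is where the stratification results on fibers of the moment map from \cite{alg_hamil} and \cite{fibers} enter, is the delicate part; by comparison the slice theorem and the algebraic product decomposition are essentially formal.
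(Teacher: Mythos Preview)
Your proposal is correct and follows the same approach as the paper: the symplectic slice theorem to pass to a local model $G\times_H(\m^*\oplus V_x)$, then a splitting of the slice representation to peel off a symplectic $G$-module factor leaving a coisotropic homogeneous bundle, with the codimension-$2$ estimate supplied by the structural results of \cite{fibers}. The paper's own proof is simply a pointer to \cite{fibers}, Corollary~3.10 together with the slice theorem (for which it cites \cite{slice} and \cite{Knop2}, Theorem~5.1, rather than \cite{alg_hamil}); you have essentially unpacked what those citations contain and correctly flagged the codimension-$2$ step as the substantive input coming from \cite{fibers}.
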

Here $X^\wedge_{Gx}, X'^\wedge_{Gx'}$ denote the completions of
$X,X'$ w.r.t. $Gx,Gx'$.
\begin{proof}
This follows from  \cite{fibers}, Corollary 3.10, and the symplectic
slice theorem, see \cite{slice} and \cite{Knop2}, Theorem 5.1.
\end{proof}

\section{Fedosov quantization}\label{SECTION_quant}
Let $B$ be a commutative associative algebra with unit equipped with
a Poisson bracket.

\begin{defi}\label{defi:1.1}
The map $*:B[[\hbar]]\otimes_{\K[[\hbar]]} B[[\hbar]]\rightarrow
B[[\hbar]]$ is called a {\it star-product} if it satisfies the
following conditions: \begin{itemize}
\item[(*1)] $*$ is $\K[[\hbar]]$-bilinear and continuous in the $\hbar$-adic
topology.
\item[(*2)] $*$ is associative, equivalently,  $(f*g)*h=f*(g*h)$ for all $f,g,h\in B$,
and $1\in B$ is a unit for $*$.
\item[(*3)] $f*g-fg\in \hbar B[[\hbar]],f*g-g*f-\hbar\{f,g\}\in \hbar^2B[[\hbar]]$ for all $f,g\in B$.
\end{itemize}
\end{defi}
By (*1), a star-product is uniquely determined by its restriction to
$B$. One may write $f*g=\sum_{i=0}^\infty D_i(f,g)\hbar^i,f,g\in B,
D_i:B\otimes B\rightarrow B$. Condition (*3) is equivalent to
$D_0(f,g)=fg, D_1(f,g)-D_1(g,f)=\{f,g\}$. If all $D_i$ are
bidifferential operators, then the star-product $*$ is called {\it
differential}. When we consider $B[[\hbar]]$ as an algebra w.r.t.
the star-product, we call it a {\it quantum algebra}.

\begin{Ex}\label{Ex:1.1}
Let $X=V$ be a finite-dimensional vector space equipped with a
constant nondegenerate Poisson bivector $P$.  The {\it Moyal-Weyl}
star-product on $\K[V][[\hbar]]$ (or $\K[[V,\hbar]]$) is defined by
$$f*g=\exp(\frac{\hbar}{2}P)f(x)\otimes g(y)|_{x=y}.$$
Here $P$ is considered as an element of $V\otimes V$. This space
acts naturally on $\K[V]\otimes \K[V]$.
\end{Ex}

Let  $G$ be an algebraic group acting on $B$ by automorphisms. It
makes sense to speak about  $G$-invariant star-products ($\hbar$ is
supposed to be $G$-invariant). Now let $\K^\times$ act on $B,
(t,a)\mapsto t.a$ by automorphisms. Consider the action
$\K^\times:B[[\hbar]]$ given by $t.\sum_{i=0}^\infty
a_j\hbar^j=\sum_{j=0}^\infty t^{jk}(t.a_j)\hbar^j$. If $\K^\times$
acts by automorphisms of $*$, then we say that $*$ is {\it homogeneous}.
Clearly, $*$ is homogeneous iff the map $D_l:B\otimes B\rightarrow
B$ is homogeneous of degree $-kl$.

 For
instance,  the Moyal-Weyl star-product $*$ is invariant with respect
to $\Sp(V)$. The action $\K^\times:V$ given by $(t,v)\mapsto
t^{-1}v$ makes $*$ homogeneous (for $k=2$).

Now we review the Fedosov approach (\cite{Fedosov1},\cite{Fedosov2})
to deformation quantization of smooth affine symplectic varieties.
Although Fedosov studied smooth real manifolds, his approach works
as well for smooth symplectic varieties and smooth formal schemes.
Let $X$ be a smooth variety with symplectic form $\omega$.

According to Fedosov, to construct a star-product one needs to fix a
symplectic connection on a variety in interest.

\begin{defi}\label{defi:1.3}
By a symplectic connection we mean  a torsion-free covariant
derivative $\nabla$ such that $\nabla\omega=0$.
\end{defi}

One can also define a symplectic connection on a smooth formal
scheme. In particular, if $\nabla$ is a symplectic connection on $X$
and $Y$ is a smooth subvariety of $X$, then $\nabla$ restricts to a
symplectic connection on the completion $X^\wedge_Y$.

It turns out that a symplectic connection on $X$ exists provided $X$
is affine. In fact, we need  a stronger version of this
claim.

\begin{Prop}[\cite{Wquant}, Proposition 2.22]\label{Prop:1.4}
Let $G$ be a reductive group  acting on $X$ by symplectomorphisms
and $\K^\times$ act on $X$ by $G$-equivariant automorphisms such
that $t.\omega= t^k\omega$ for some $k\in\Z$. Then there is a
$G\times\K^\times$-invariant symplectic connection $\nabla$ on $X$.
\end{Prop}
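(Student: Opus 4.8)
The plan is to construct an arbitrary symplectic connection on $X$ and then average it into a $G\times\K^\times$-invariant one, exploiting that the symplectic connections form an affine space over a rational representation of the reductive group $H:=G\times\K^\times$.

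First I would produce a symplectic connection on $X$, not yet invariant. Since $X$ is smooth and affine, $TX$ is locally free and, because higher cohomology of coherent sheaves on an affine variety vanishes, the Atiyah class obstructing an algebraic connection lies in $H^1(X,\Omega^1_X\otimes\operatorname{End}TX)=0$; hence $TX$ admits a connection $\nabla'$. Its torsion is $\O_X$-linear, so subtracting half of it yields a torsion-free connection $\nabla^0$. To make it compatible with $\omega$, set $N(\xi,\eta,\zeta):=(\nabla^0_\xi\omega)(\eta,\zeta)$; since $\nabla^0$ is torsion-free and $d\omega=0$, the tensor $N$ is skew in its last two arguments and has vanishing cyclic sum. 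Defining $A$ by $\omega(A(\xi,\eta),\zeta)=\tfrac13\bigl(N(\xi,\eta,\zeta)+N(\eta,\xi,\zeta)\bigr)$, a direct check (using the two symmetries of $N$) shows that $\nabla:=\nabla^0+A$ is torsion-free and satisfies $\nabla\omega=0$.

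Next I would record the affine structure of the solution set. Two symplectic connections differ by a tensor $A$ for which $\omega(A(\cdot,\cdot),\cdot)$ is totally symmetric, i.e.\ by a global section of $S^3T^*X$, and conversely every such section modifies a symplectic connection into another one. Thus the set $\V$ of symplectic connections is a nonempty affine space modeled on $W:=\Gamma(X,S^3T^*X)$. The group $H$ acts on $X$ by automorphisms and preserves symplecticity: $G$ fixes $\omega$ while $\K^\times$ rescales it by $t^k$, and in either case the conditions $\nabla\omega=0$ and vanishing torsion are preserved. Hence $H$ acts on $\V$ by affine transformations whose linear part is the natural action on $W$. As the module of global sections of the $H$-equivariant coherent sheaf $S^3T^*X$ on the affine $H$-variety $X$, the space $W$ is a rational $H$-module (the identification with difference tensors uses $\omega$, which only twists the $\K^\times$-weight by $t^{\pm k}$ and stays algebraic).

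Finally I would invoke reductivity. Fixing a basepoint $\nabla_0\in\V$ turns the action into $g\cdot w=g\cdot_W w+\gamma(g)$, where $\gamma(g):=g\cdot\nabla_0-\nabla_0\in W$ is a $1$-cocycle; a fixed point is an element $w\in W$ with $\gamma(g)=w-g\cdot_W w$, i.e.\ a trivialization of $\gamma$. Because $H=G\times\K^\times$ is reductive and $\operatorname{char}\K=0$, the Reynolds operator splits $W=W^H\oplus W'$ equivariantly and $H^1(H,W)=0$, so $\gamma$ is a coboundary; equivalently, a reductive group acting by affine transformations on a nonempty affine space over a rational module always has a fixed point. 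Such a fixed point is the desired $H$-invariant symplectic connection. The essential content is this last step — averaging in the affine torsor $\V$; the only points demanding care are the algebraic-category existence of the initial connection (supplied by Serre vanishing) and the verification that $W$ is genuinely rational so that complete reducibility applies, both of which are routine.
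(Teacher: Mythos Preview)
The paper does not supply its own proof of this proposition; it simply imports the statement from \cite{Wquant}, Proposition~2.22. So there is nothing in the present paper to compare your argument against line by line.

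That said, your argument is correct and is the standard one. The three steps --- (i) produce some algebraic symplectic connection on the smooth affine $X$ via Serre vanishing and the usual torsion/symmetrization corrections, (ii) observe that symplectic connections form an affine space modelled on the rational $H$-module $\Gamma(X,S^3T^*X)$, and (iii) use complete reducibility of $H=G\times\K^\times$ (equivalently, $H^1(H,W)=0$ for rational $W$) to find a fixed point --- are exactly what one expects and are almost certainly what the cited reference does. The one place a careful write-up would linger is step~(iii): you should make explicit that the cocycle $h\mapsto h\cdot\nabla_0-\nabla_0$ is actually a regular map $H\to W$ (equivalently, lies in $\K[H]\otimes W$), so that the extended module $W\oplus\K$ carrying the affine action is genuinely a rational $H$-module and complete reducibility applies. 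You allude to this but do not quite say it.
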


Fedosov constructed a differential star-product on $\K[X]$ starting
with a symplectic connection $\nabla$ and $\lambda\in
H^2_{DR}(X)[[\hbar]]$, see \cite{Fedosov2}, Section 5.3 or
\cite{GR_moment}. The element $\lambda$ is referred to as the {\it
characteristic cycle} of $*$. We remark that all intermediate
objects used in Fedosov's construction are obtained from some
regular objects (such as $\omega, \nabla$ or the curvature tensor of
$\nabla$) by a recursive procedure and so are regular too. If a
reductive group $G$ acts on $X$ by symplectomorphisms (resp.,
$\K^\times$ acts on $X$ such that $t.\omega=t^k\omega$), then $*$ is
$G$-invariant, resp., homogeneous.

In the sequel we will need the following result on equivalence of
two different Fedosov star-products on a formal scheme.

\begin{Prop}\label{Prop:1.5}
Let $G$ be a reductive group acting on affine symplectic varieties
$X,X'$ by symplectomorphisms, $\nabla,\nabla'$ be $G$-invariant
symplectic connections on $X,X'$, and $\lambda\in
H^2_{DR}(X)[[\hbar]],$ $ \lambda'\in H^2_{DR}(X')[[\hbar]]$.
Further, let $x,x'$ be points of $X,X'$ with closed $G$-orbits.
Suppose that there is a $G$-equivariant symplectomorphism
$\psi:X^\wedge_{Gx}\rightarrow X'^\wedge_{Gx'}$ such that
$\psi^*(\lambda'|_{Gx'})=\lambda|_{Gx}$. Then there are differential
operators $T_i,i=1,2,\ldots,$ on $X^\wedge_{Gx}$ such that
$(id+\sum_{i=1}^\infty T_i\hbar^i)\circ\psi^*$ is an isomorphism of
the  quantum algebras $\K[X']^{\wedge}_{Gx'}[[\hbar]],
\K[X]^\wedge_{Gx}[[\hbar]]$.
\end{Prop}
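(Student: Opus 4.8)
The plan is to transport everything to the single formal scheme $X^\wedge_{Gx}$ and then apply Fedosov's classification of star-products by their characteristic class, carried out $G$-equivariantly. First I would push the star-product from $X'$ to $X$ along $\psi$: since $\psi$ is a $G$-equivariant symplectomorphism, $\psi^*$ is a $G$-equivariant Poisson isomorphism $\K[X']^\wedge_{Gx'}\to \K[X]^\wedge_{Gx}$, and transporting the Fedosov product $*'$ of $(\nabla',\lambda')$ through it yields a star-product $*_2$ on $\K[X]^\wedge_{Gx}[[\hbar]]$. By naturality of Fedosov's construction under symplectomorphisms, $*_2$ is the Fedosov product attached to the transported connection $\psi^*\nabla'$ and class $\psi^*(\lambda'|_{Gx'})$. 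Writing $*_1$ for the Fedosov product of $(\nabla,\lambda)$ restricted to $X^\wedge_{Gx}$, I will have reduced the claim to the equivalence of two $G$-invariant Fedosov products $*_1,*_2$ on one and the same formal scheme: they carry the same symplectic form (as $\psi$ is symplectic) and, by the hypothesis $\psi^*(\lambda'|_{Gx'})=\lambda|_{Gx}$, the same characteristic class, differing only in the choice of $G$-invariant symplectic connection.

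Next I would invoke the core, non-equivariant fact from Fedosov's theory (\cite{Fedosov2}): the equivalence class of a star-product depends only on its characteristic class, and an equivalence between two such products is realized by an operator $\id+\sum_{i\geq 1}T_i\hbar^i$ with the $T_i$ differential. In Fedosov's model each $*_i$ is the algebra of flat sections of the Weyl bundle $\mathcal{W}$ equipped with a flat connection $D_i=-\delta+\nabla_i+\tfrac{1}{\hbar}[r_i,\cdot]$, where $r_i$ is produced by the standard recursion via the homotopy operator $\delta^{-1}$; equality of the characteristic classes yields a gauge transformation $U=\exp(\tfrac{1}{\hbar}\ad h)$ conjugating $D_1$ into $D_2$, and passing to flat sections turns $U$ into an equivalence of the desired form.

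The hard part will be to make this equivalence $G$-equivariant, so that the resulting $T_i$ commute with the $G$-action; this is exactly where reductivity of $G$ enters. All ingredients of Fedosov's recursions --- the form $\omega$, the fiber grading of $\mathcal{W}$, the operators $\delta,\delta^{-1}$, both connections $\nabla$ and $\psi^*\nabla'$, and the characteristic class --- are $G$-invariant. Since $G$ is reductive, the Reynolds projector onto $G$-invariants commutes with each of these operations, so I can run the recursions for $r_1,r_2$ and for the gauge element $h$ entirely within the invariant parts of the relevant section spaces, obtaining $G$-invariant $r_1,r_2,h$ and hence a $G$-equivariant $U$. The technical point to verify is that the sections of $\mathcal{W}$ over $X^\wedge_{Gx}$ form a rational, locally finite $G$-module, so that complete reducibility genuinely applies; this is guaranteed by reductivity of $G$ together with the fact that $Gx$ is a single closed orbit, so the formal neighborhood is assembled from finite-dimensional $G$-modules.

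Finally, composing the $G$-equivariant equivalence $\id+\sum_{i\geq 1}T_i\hbar^i\colon(\K[X]^\wedge_{Gx}[[\hbar]],*_2)\to(\K[X]^\wedge_{Gx}[[\hbar]],*_1)$ with $\psi^*\colon(\K[X']^\wedge_{Gx'}[[\hbar]],*')\to(\K[X]^\wedge_{Gx}[[\hbar]],*_2)$ produces the asserted isomorphism of quantum algebras, of the required form and $G$-equivariant by construction.
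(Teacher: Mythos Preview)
Your approach is exactly the paper's: reduce to two Fedosov products on the single formal scheme $X^\wedge_{Gx}$ with the same characteristic class, then invoke Fedosov's equivalence theorem (\cite{Fedosov2}, Theorem~5.5.3); indeed the paper's entire proof is the one-line remark ``note that $\psi^*(\lambda'|_{X'^\wedge_{Gx'}})=\lambda|_{X^\wedge_{Gx}}$ and repeat Fedosov's argument.'' The only point the paper makes explicit that you gloss over is precisely this passage from the hypothesis $\psi^*(\lambda'|_{Gx'})=\lambda|_{Gx}$ (equality of classes restricted to the \emph{orbits}) to equality of the classes on the \emph{completions}, which follows because the restriction $H^2_{DR}(X^\wedge_{Gx})\to H^2_{DR}(Gx)$ is an isomorphism; your write-up silently identifies these, so just add one sentence to justify it.
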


Again, to prove these we note that
$\psi^*(\lambda'|_{X^\wedge_{Gx'}})=\lambda|_{X^\wedge_{Gx}}$ and
repeat Fedosov's argument in  \cite{Fedosov2}, Theorem 5.5.3.

Now let us discuss quantization of Hamiltonian actions. Let $X$ be a
smooth affine symplectic variety and  $*$  a star-product on
$\K[X][[\hbar]]$. Let $G$ be a reductive group acting on $X$ by
automorphisms of $*$. We say that this action is {\it
$*$-Hamiltonian} if there is a $G$-equivariant
 linear map $\g\rightarrow \K[X][[\hbar]],\xi\mapsto \widehat{H}_\xi,$ satisfying the equality
\begin{equation}\label{eq:2.5}[\widehat{H}_\xi,f]=\hbar \xi_* f, \forall \xi\in\g, f\in \K[X].\end{equation}

The functions $\widehat{H}_\xi$ are said to be {\it quantum
hamiltonians} of the action.

Let $H_\xi$ be the classical part of $\widehat{H}_\xi$, that is,
$H_\xi\in \K[X], \widehat{H}_\xi\equiv H_\xi (\mod \hbar)$. Then the
map $\xi\mapsto H_\xi$ turns $X$ into a Hamiltonian $G$-variety.
Conversely, the following result takes place.

\begin{Thm}[\cite{GR_moment}, Theorem 6.2]\label{Thm:2.4}
Let $X$ be an affine symplectic Hamiltonian $G$-variety and $*$ be the star-product  on $\K[X][[\hbar]]$  obtained by the Fedosov
construction with a $G$-invariant connection $\nabla$ and
$\lambda\in H^2_{DR}(X)[[\hbar]]$. Then the following conditions are
equivalent:
\begin{enumerate} \item The $G$-variety $X$ has a $*$-Hamiltonian structure.
\item The 1-form $i_{\xi_*}\widetilde{\lambda}$ is exact for each $\xi\in\g$, where $\widetilde{\lambda}$
is a representative of $\lambda$ and $i_{\xi_*}\widetilde{\lambda}$
denotes the contraction of $\widetilde{\lambda}$ and $\xi_*$.
\end{enumerate}
\end{Thm}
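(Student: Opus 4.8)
The plan is to run Fedosov's construction and track how a velocity field $\xi_*$ acts on the resulting quantum algebra. Recall that the star-product is built on the Weyl bundle $\mathcal W$ over $X$: one forms the abelian Fedosov connection $D=-\delta+\nabla+\tfrac1\hbar[r,\,\cdot\,]$, whose total connection form I denote $\gamma$, with central Weyl curvature admitting (up to the usual normalisation) a representative $\widetilde\Omega=\omega+\widetilde\lambda$; the Fedosov--Taylor map $\tau$ then identifies $\K[X][[\hbar]]$ with the space $\ker D$ of flat sections, and $f*g=\sigma(\tau(f)\circ\tau(g))$, where $\circ$ is the fibrewise Weyl product and $\sigma$ the symbol. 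Since $\nabla$ and the chosen representative of $\lambda$ may be taken $G$-invariant, the recursion producing $r$ yields a $G$-invariant $D$, hence a $G$-invariant $*$; in particular each $\xi_*$ preserves $D$, so $L_{\xi_*}$ carries $\ker D$ to itself and descends to a derivation of $(\K[X][[\hbar]],*)$ whose symbol is $\xi_*$.

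First I would record the identity governing this derivation. Because $\xi_*$ preserves every ingredient of the Fedosov data, $L_{\xi_*}\gamma=0$, and Cartan's formula in the Weyl bundle rearranges this into
\[
D(i_{\xi_*}\gamma)=-\,i_{\xi_*}\widetilde\Omega .
\]
Thus the section $i_{\xi_*}\gamma$ fails to be flat exactly by the scalar closed $1$-form $i_{\xi_*}\widetilde\Omega$, and this $1$-form is the obstruction to making $L_{\xi_*}$ inner. Moreover $i_{\xi_*}\widetilde\Omega=i_{\xi_*}\omega+i_{\xi_*}\widetilde\lambda=dH_\xi+i_{\xi_*}\widetilde\lambda$, using the classical hamiltonian $i_{\xi_*}\omega=dH_\xi$ supplied by the Hamiltonian structure; hence $i_{\xi_*}\widetilde\Omega$ is exact if and only if $i_{\xi_*}\widetilde\lambda$ is exact.

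For (2)$\Rightarrow$(1) I would assume $i_{\xi_*}\widetilde\lambda=d\beta_\xi$ and put $\widehat H_\xi:=H_\xi+\beta_\xi$, so that $i_{\xi_*}\widetilde\Omega=d\widehat H_\xi$. Since $\widehat H_\xi$ is a scalar (fibrewise central) section, $D\widehat H_\xi=d\widehat H_\xi$, and the displayed identity gives $D\bigl(i_{\xi_*}\gamma+\widehat H_\xi\bigr)=0$; thus $a_\xi:=i_{\xi_*}\gamma+\widehat H_\xi\in\ker D$ is flat with symbol lying in $\K[X][[\hbar]]$. A direct comparison of $\tfrac1\hbar[a_\xi,\,\cdot\,]$ with $L_{\xi_*}$ on flat sections then yields $[\widehat H_\xi,f]=\hbar\,\xi_* f$, i.e. $\widehat H_\xi$ is a quantum hamiltonian. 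As $\xi\mapsto i_{\xi_*}\widetilde\lambda$ is linear and $G$-equivariant and $G$ is reductive, the primitives $\beta_\xi$, and with them the assignment $\xi\mapsto\widehat H_\xi$, can be chosen linear and $G$-equivariant by averaging, producing a genuine $*$-Hamiltonian structure. For the converse (1)$\Rightarrow$(2), given quantum hamiltonians the flat section $\tau(\widehat H_\xi)$ and the section $i_{\xi_*}\gamma$ induce the same derivation of $\ker D$, so their difference is fibrewise central; comparing symbols forces $i_{\xi_*}\widetilde\Omega$ to be exact, whence $i_{\xi_*}\widetilde\lambda$ is exact.

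I expect the main obstacle to be the middle step: pinning down, inside the Weyl bundle, the precise sense in which $i_{\xi_*}\gamma$ generates the derivation $L_{\xi_*}$ and that its defect is exactly $i_{\xi_*}\widetilde\Omega$. This demands a careful Cartan-type calculation with the fibrewise bracket, together with the verification that passing to symbols converts $\tfrac1\hbar[\,\cdot\,,\,\cdot\,]$ into the star-commutator; equivalently, it is the order-by-order solvability of the quantum moment-map equation whose obstructions are the de Rham classes $[i_{\xi_*}\widetilde\lambda]\in H^1_{DR}(X)[[\hbar]]$. The linear, $G$-equivariant choice of primitives and the two implications are then formal consequences of the displayed identity.
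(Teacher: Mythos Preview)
The paper does not actually prove this theorem: it is quoted verbatim as Theorem~6.2 of \cite{GR_moment}, with the only accompanying remark being that Gutt and Rawnsley's argument, carried out for $C^\infty$ manifolds, transfers unchanged to the algebraic setting. So there is no ``paper's own proof'' to compare against beyond that one-line reduction.

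Your proposal is a faithful sketch of the Gutt--Rawnsley argument itself. The key identity $D(i_{\xi_*}\gamma)=-i_{\xi_*}\widetilde\Omega$, obtained from $L_{\xi_*}\gamma=0$ via Cartan's formula and the flatness of $D$, is exactly their Lemma~6.1, and your deduction of both implications from it is the content of their Theorem~6.2. The one point worth tightening is the $G$-equivariant linear choice of primitives $\beta_\xi$: since $X$ is affine, $H^1_{DR}(X)$ is finite-dimensional, so one may split the $G$-map $\g\to\Omega^1(X)[[\hbar]]$, $\xi\mapsto i_{\xi_*}\widetilde\lambda$, through $d$ by reductivity; you state this but it is where the algebraic/affine hypothesis is used. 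Otherwise the argument is sound and is precisely what the paper is invoking by citation.
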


Again, we remark that, although Gutt and Rawnsley dealt with smooth
manifolds, their results remain valid in the algebraic category as
well.

\begin{defi}\label{defi:1.13}
Let $\g$ be a Lie algebra.  By the {\it homogeneous enveloping
algebra} $U_\hbar(\g)$ we mean the quotient of $T(\g)[\hbar]$ by the
ideal generated by $\xi\eta-\eta\xi-\hbar[\xi,\eta], \xi,\eta\in\g$.
The {\it completed homogeneous enveloping algebra}
$U^\wedge_\hbar(\g)$ is, by definition,
$\varprojlim_{k\rightarrow\infty} U_\hbar(\g)/\hbar^k U_\hbar(\g)$.
\end{defi}

Below $\g$ is a reductive Lie algebra.  Elements of
$U^\wedge_\hbar(\g)$ are identified with formal power series
$\sum_{i=0}^\infty \xi_i\hbar^i,\xi_i\in\g$. It is clear that the
map $\xi\mapsto \widehat{H}_\xi$ is extended to a unique continuous
$\K[[\hbar]]$-algebra homomorphism $U^\wedge_\hbar(\g)\rightarrow
\K[X][[\hbar]]$. Note that the map $\sum_{i=0}^\infty
\xi_i\hbar^i\mapsto \xi_0$ is a $G$-equivariant algebra epimorphism
$U^\wedge_\hbar(\g)\twoheadrightarrow S(\g)$.

Now let us describe the center of $U^\wedge_\hbar(\g)$. Clearly,
$\ZA(U^\wedge_\hbar(\g))=U^\wedge_\hbar(\g)^\g$. Therefore there is
the natural epimorphism $\ZA(U^\wedge_\hbar(\g))\rightarrow
S(\g)^\g$. Since $S(\g)^\g$ is a polynomial algebra, there is a
section $S(\g)^\g\hookrightarrow \ZA(U^\wedge_\hbar(\g))$ of the
above epimorphism. Any such  section is extended to a topological
$\K[[\hbar]]$-algebra isomorphism $S(\g)^\g[[\hbar]]\rightarrow
\ZA(U^\wedge_\hbar(\g))$. Note that there are unique continuous
actions $\K^\times:S(\g)^\g[[\hbar]], \ZA(U^\wedge_\hbar(\g))$ such
that $t.\xi=t\xi, t.\hbar=t\hbar, \xi\in\g,t\in\K^\times$. An
isomorphism $S(\g)^\g[[\hbar]]\rightarrow \ZA(U^\wedge_\hbar(\g))$
can be chosen  $\K^\times$-equivariant. 

\section{The proof of the main theorem}\label{SECTION_main}
Set $A:=\K[C_{G,X}]$. Let $A_\hbar$ denote the integral closure of
the image of $\ZA(U^\wedge_\hbar(\g))$ in  $\ZA(\K[X][[\hbar]]^G)$.
We have the natural algebra homomorphism $A_\hbar\rightarrow A,
\sum_{i=0}^\infty f_i\hbar^i\mapsto f_0$.

Here is the main result of this section.
\begin{Thm}\label{Thm:2.1}
Suppose that $\K^\times$ acts on $X$ such that $*$ is homogeneous
with $t.\hbar=t^k\hbar, k\in\Z,
t.\widehat{H}_\xi=t^k\widehat{H}_\xi$. There is a
$\K^\times$-equivariant isomorphism $A[[\hbar]]\rightarrow A_\hbar$
of $\K[[\hbar]]$-algebras such that the following diagram is
commutative.

\begin{picture}(100,30)
\put(2,22){$S(\g)^\g[[\hbar]]$} \put(2,2){$\ZA(U^\wedge_\hbar(\g))$}
\put(35,22){$A[[\hbar]]$}\put(36,2){$A_\hbar$}\put(55,13){$A$}
\put(38,20){\vector(0,-1){13}}
\put(45,22.5){\vector(3,-2){10}}\put(41,5){\vector(3,2){13}}
\put(8,20){\vector(0,-1){13}}\put(10,13){{\tiny $\cong$}}
\put(20,23.5){\vector(1,0){14}} \put(20,4){\vector(1,0){14}}
\end{picture}
\end{Thm}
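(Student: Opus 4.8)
The plan is to show that $A_\hbar$ is nothing but a trivial formal deformation of $A$, compatible with the maps out of the enveloping algebra. Write $R\subset\K[X]^G$ for the image of $S(\g)^\g$, so that $A$ is the integral closure of $R$, and write $R_\hbar\subset\ZA(\K[X][[\hbar]]^G)$ for the image of $\ZA(U^\wedge_\hbar(\g))=U^\wedge_\hbar(\g)^\g$. First I would record the soft structural facts: since $A_\hbar$ sits inside the $\hbar$-torsion-free ring $\K[X][[\hbar]]$ it is itself $\hbar$-torsion-free, hence flat over $\K[[\hbar]]$; it is $\hbar$-adically complete (which I would verify via module-finiteness over $R_\hbar$, deforming the finiteness of $\tau$); and the reduction-mod-$\hbar$ map $A_\hbar\to A$ of the statement is well defined because an element integral over $R_\hbar$ reduces to one integral over $R$. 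The theorem then amounts to two assertions: that this reduction identifies $A_\hbar/\hbar A_\hbar$ with \emph{all} of $A$ (so that $A_\hbar$ deforms $A$ and not a smaller ring), and that the deformation is $\K^\times$-equivariantly trivial compatibly with $S(\g)^\g[[\hbar]]\cong U^\wedge_\hbar(\g)^\g$. Flatness plus completeness would then let me build $A[[\hbar]]\to A_\hbar$ by lifting a finite generating set of $A$ over $R$ to integral elements of $A_\hbar$ and solving the resulting recursion order by order in $\hbar$.

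To get a handle on the integral closure I would localize. By Proposition \ref{Prop:1.11} the classical central invariants are $\widetilde\psi^*(\K[\im\widetilde\psi])$ with $\widetilde\psi$ open; by Proposition \ref{Prop:1.12} there is an open $Z^0\subset X\quo G$ whose complement has codimension $\geq 2$ such that over $X^0:=\pi^{-1}(Z^0)$ the variety decomposes, near each closed orbit, as a product $X_0\times V$ with $X_0$ coisotropic and a homogeneous vector bundle. On such a completion I would use Proposition \ref{Prop:1.5} to replace $*$ by an equivalent Fedosov product that splits as a Moyal--Weyl product along $V$ times a product on $X_0$; the $V$-directions contribute nothing to the central invariants, while the coisotropy of $X_0$ forces $\ZP(\K(X_0)^G)$, and hence the relevant integral closure, to be algebraic over the base $A$. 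The outcome of this local analysis (the role of Proposition \ref{Prop:2.3}) is that over each such formal neighbourhood the integral closure of the image of $U^\wedge_\hbar(\g)^\g$ is exactly $A[[\hbar]]$ with its tautological quantum structure, giving a local $\K^\times$-equivariant trivialization compatible with the enveloping-algebra map.

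Finally I would globalize. Because $C_{G,X}$ is integrally closed, hence normal, and the locus removed above has codimension $\geq 2$, both $A$ and (by its flatness and completeness over $\K[[\hbar]]$) $A_\hbar$ enjoy a Hartogs extension property: a central invariant regular on $X^0$ and integral over $R_\hbar$ extends across the codimension-$2$ complement. This lets the local trivializations be assembled into a single global isomorphism $A[[\hbar]]\to A_\hbar$; the grading, with $\hbar$ of $\K^\times$-weight $k$, together with the $\K^\times$-equivariance of the section $S(\g)^\g\hookrightarrow U^\wedge_\hbar(\g)^\g$, is what pins the isomorphism down and makes the triangle with $A$ and the square with $\ZA(U^\wedge_\hbar(\g))$ commute.

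The step I expect to be the genuine obstacle is the identification $A_\hbar/\hbar A_\hbar\cong A$ together with the triviality of the deformation, i.e. proving that quantization neither creates nor destroys integral elements. Everything rests on the local coisotropic computation, and the essential mechanism there is homogeneity: in a graded deformation over $\K[[\hbar]]$ with $\hbar$ of weight $k$, the order-by-order obstructions to lifting an integral element land in prescribed graded components, and the $\K^\times$-action forces them to vanish, yielding both surjectivity onto $A$ and a canonical trivialization. Controlling the interaction of this recursion with the finite morphism $\tau$ and with the $\hbar$-adic completion is where the care is needed.
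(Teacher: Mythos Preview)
Your overall architecture matches the paper's---lift integral elements of $A$ to $A_\hbar$ order by order in $\hbar$, then use the local structure of Propositions \ref{Prop:1.12}, \ref{Prop:1.5} and \ref{Prop:2.3} to control the result---but there is a genuine gap in the mechanism you name for solving the lifting obstructions. You say ``the essential mechanism there is homogeneity: \ldots the $\K^\times$-action forces [the obstructions] to vanish.'' It does not. If $f\in A$ satisfies a monic $P(t)$ over the current base and $\widehat f_{(m)}=\sum_{i\le m}f_i\hbar^i$ has been built, the $\hbar^{m+1}$-coefficient of the quantum equation is $\tfrac{dP}{dt}(f)\,f_{m+1}-Q$, with $Q$ determined by the earlier $f_i$; nothing about the grading kills $Q$. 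One must \emph{divide} by $\tfrac{dP}{dt}(f)$, so the Hensel-type recursion only runs on the open set $X^1=\{\tfrac{dP}{dt}(f)\neq 0\}$, producing $\widehat f\in\K[X^1][[\hbar]]^G$. Homogeneity is used merely to keep $\widehat f$ graded and hence to make the final map $\K^\times$-equivariant; it is not what makes the recursion solvable.

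This also means you have the role of Proposition \ref{Prop:2.3} backwards. It is not a local identification of the integral closure on formal neighbourhoods; it is an \emph{extension-and-centrality} result: any $\widehat f\in\K[X^0][[\hbar]]^G$ integral over the image of $\ZA(U^\wedge_\hbar(\g))$, for $X^0$ the non-vanishing locus of some $g\in\psi^*(S(\g)^\g)$, already lies in $\ZA(\K[X][[\hbar]]^G)$. In the paper this is exactly what promotes the Hensel lift from $X^1$ (which contains such an $X^0$: take $g$ to be the norm of $\tfrac{dP}{dt}(f)$ down to $\psi^*(S(\g)^\g)$) to a global central element, and the paper packages the whole construction as a Noetherian induction on intermediate subalgebras $B_0\subset B\subset A$. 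Your ``local trivializations glued by Hartogs on $C_{G,X}$'' would still need the Hensel step on an open set to produce anything to glue, and the gluing is precisely the content of Proposition \ref{Prop:2.3}; once the mechanism is corrected, your argument collapses onto the paper's. (A smaller point: the ``soft facts'' that $A_\hbar$ is module-finite over $R_\hbar$ and $\hbar$-adically complete are outputs, not inputs---you only know them after identifying $A_\hbar$ with $A[[\hbar]]$.)
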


Let $B$ be a commutative algebra and $V$ a symplectic vector space.
We denote by $\hat{W}_V(B)$ the algebra $B[[V^*,\hbar]]$ equipped
with the Moyal-Weyl star-product.

\begin{Lem}\label{Lem:2.4}
Let $B$ be an integral domain and $V$ a symplectic vector space. If
$f\in \hat{W}_V(B)$ is integral over $B[[\hbar]]$, then $f\in
B[[\hbar]]$.
\end{Lem}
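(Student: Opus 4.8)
The plan is to peel off the $\hbar$-coefficients of $f$ one at a time and show that each lies in $B$. Two structural facts drive everything. First, both $B$ and $\hbar$ are central in $\hat{W}_V(B)$ — the Moyal--Weyl bidifferential operators $D_k$, $k\geqslant 1$, only differentiate in the $V$-directions and annihilate elements of $B$ — so $B[[\hbar]]$ is a \emph{central} subalgebra and ``integral over $B[[\hbar]]$'' means a genuine monic relation with central coefficients. Second, modulo $\hbar$ the star-product becomes the ordinary commutative product, so $\hat{W}_V(B)/(\hbar)\cong B[[V^*]]$, a domain whose fraction field extends $Q:=\Quot(B)$. These two facts let me transfer an integral relation in the noncommutative algebra into an honest algebraic relation in the commutative domain $B[[V^*]]$.

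The first thing I would establish is a purely commutative normality statement: \emph{any element of $Q[[V^*]]$ algebraic over $Q$ already lies in $Q$}; consequently any $g\in B[[V^*]]$ algebraic over $Q$ lies in $Q\cap B[[V^*]]=B$. To prove this I would subtract the constant term $g_0\in Q$, so that $g-g_0$ has strictly positive order for the $\m$-adic valuation $v$ ($\m=(V^*)$), which is an honest valuation on $\Quot(Q[[V^*]])$ because $\operatorname{gr}_{\m}$ is a polynomial domain. If $g-g_0\neq 0$, its minimal polynomial over $Q$ is irreducible with nonzero constant term $a_0$; evaluating $v$ on $(g-g_0)^r=-\bigl(a_{r-1}(g-g_0)^{r-1}+\dots+a_0\bigr)$ gives $v(\mathrm{LHS})>0$ while $v(\mathrm{RHS})=v(a_0)=0$, a contradiction. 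Hence $g=g_0\in Q$.

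The main step is then: if $\hbar^N h$ is integral over $B[[\hbar]]$ for some $h\in\hat{W}_V(B)$, then $\bar h:=h\bmod\hbar\in B$. Writing a monic relation $(\hbar^N h)^{n}+\sum_{k<n}c_k(\hbar^N h)^{k}=0$ with $c_k\in B[[\hbar]]$ and pulling out $\hbar^{Nk}$ by centrality of $\hbar$, I would compare lowest $\hbar$-orders: the summand $c_k\hbar^{Nk}h^{k}$ has $\hbar$-order $d_k+Nk$ (with $d_k$ the $\hbar$-order of $c_k$) and leading coefficient $\gamma_k\bar h^{k}$, where $\gamma_k\in B\setminus\{0\}$ is the leading coefficient of $c_k$ and $\bar h^{k}$ is the \emph{ordinary} power, since products are commutative modulo $\hbar$. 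Extracting the coefficient of the minimal order $\mu=\min_k(d_k+Nk)$ yields $\sum_{k\in S}\gamma_k\bar h^{k}=0$ in the domain $B[[V^*]]$, where $\emptyset\neq S$ collects the indices attaining $\mu$; thus $\bar h$ is a root of the nonzero polynomial $\sum_{k\in S}\gamma_kT^{k}\in Q[T]$, so $\bar h\in B$ by the commutative step. Finally, a minimal-counterexample argument closes the lemma: if some coefficient $f_{j}$ of $f$ lay outside $B$, pick the least such $j_0$; subtracting the (central) lower coefficients gives $f-\sum_{j<j_0}f_j\hbar^j=\hbar^{j_0}h$ with $\bar h=f_{j_0}\notin B$, contradicting the main step. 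Hence $f\in B[[\hbar]]$. I expect the delicate point to be the bookkeeping in the lowest-order extraction — verifying that the surviving coefficients $\gamma_k$ are nonzero and that the noncommutativity genuinely disappears in the relevant $\hbar$-order, which it does precisely because $\hbar$ and $B$ are central.
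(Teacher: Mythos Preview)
Your proof is correct and takes a genuinely different route from the paper's. The paper first enlarges $B$ to the algebraic closure $F$ of $\Quot(B)$ and invokes the Newton--Puiseux theorem to factor the monic relation as $P(t)=\prod_i(t-g_i)$ over the Puiseux field $F\{\hbar\}$; after adjoining $\nu=\hbar^{1/r}$ and passing to the enlarged Weyl algebra $\widetilde W=F[[V^*,\nu]]$, one has $\nu^n P(f)=\prod_i P_i(f)=0$ with each $P_i$ linear in $t$, and a single lowest-$\nu$-term check shows $\widetilde W$ is a domain, so some $P_i(f)=0$, forcing $f$ to coincide with a Puiseux series and hence to have no $V^*$-component. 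Your argument instead stays over the original $B$ and proceeds by $\hbar$-adic induction: reducing modulo $\hbar$ lands in the commutative domain $B[[V^*]]$, where an $\m$-adic valuation argument shows that elements algebraic over $\Quot(B)$ already lie in $B$; extracting the lowest $\hbar$-order from the integral relation then pins down each coefficient of $f$ in turn. Your approach is more elementary --- no algebraic closure, no Puiseux series --- at the cost of more bookkeeping in the order-extraction step (which you correctly flag as the delicate point); the paper's is shorter once Newton--Puiseux is granted. Both ultimately rest on the same fact, that the associated graded for the relevant filtration is a domain, but deploy it at different stages.
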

\begin{proof}
Let $F$ denote the algebraic closure of the fraction field
$\Quot(B)$ of $B$. There is the natural embedding
$\hat{W}_V(B)\hookrightarrow \hat{W}_V(F)$ and we may assume that
$B=F$. Let $P\in F[t]$ be a monic polynomial such that $P(f)=0$. The
algebraic closure of $\Quot(F[[\hbar]])$ coincide with the field
$F\{\hbar\}$ consisting of all expressions of the form $g(\hbar^{\alpha})$, where $\alpha\in \Q, g\in F[[\hbar]]$, see, for example,
\cite{Postnikov}. So there are $g_1,\ldots,g_k\in F\{\hbar\}$ such
that $P(t)=\prod_{i=1}^k (t-g_i)$. Let $\nu=\hbar^{1/r}$ be such
that $g_1,\ldots,g_k$ are Laurent power series in $\nu$. Consider the
algebra $\widetilde{W}$ that coincides with $F[[V^*,\nu]]$ as the
vector space and is equipped with the Moyal-Weyl star-product with
$\hbar=\nu^r$. The algebra $\hat{W}_V(F)$ is naturally embedded into
$\widetilde{W}$. For sufficiently large $n$ we have $\nu^n
P(f)=\prod_{i=1}^k P_i(f)$, where $P_i$ is a linear polynomial. To
prove the claim of the lemma it remains to verify that
$\widetilde{W}$ has no zero divisors. Indeed, let
$a=\sum_{i=0}^\infty a_i\nu^i, b=\sum_{i=0}^\infty b_i \nu^i,
a_i,b_i\in F[[V^*]]$ be such that $a*b=0$. Then $a_jb_l=0$, where
$j,l$ be the minimal integers such that $a_j\neq 0,b_l\neq 0$, which
is nonsense.
 \end{proof}

\begin{Prop}\label{Prop:2.3}
Denote by $\pi$ the quotient morphism $X\rightarrow X\quo G$. Let
$g$ lie in $\psi^*(S(\g)^\g)$. Set $X^0:=\{x\in X| g(x)\neq 0\}$.
 If an element $\widehat{f}\in \K[X^0][[\hbar]]^G$ is integral
over the image of $\ZA(U^\wedge_\hbar(\g))$, then
$\widehat{f}\in\ZA(\K[X][[\hbar]]^G)$.
\end{Prop}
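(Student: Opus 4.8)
The plan is to extract from the conclusion two separate assertions and prove each on a well-chosen dense family of formal neighbourhoods: that $\widehat f$ extends to a \emph{regular} element of $\K[X][[\hbar]]^G$, and that this extension is \emph{central}. Both will be tested on the completions $X^\wedge_{Gx}$ of closed orbits $Gx$ with $\pi(x)\in Z^0$, where $Z^0$ is the open set of Proposition \ref{Prop:1.12}. This is legitimate: since $\codim_{X\quo G}(X\quo G)\setminus Z^0\geqslant 2$, the locus in $X$ not covered by such models has codimension $\geqslant 2$, so by normality of the smooth variety $X$ a $G$-invariant rational function regular on $X^0$ and extending regularly across every such orbit is regular on all of $X$; likewise each commutator $[\widehat f,\widehat h]$, $\widehat h\in\K[X][[\hbar]]^G$, is a regular object whose vanishing can be detected on a dense family of these completions.

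First I would dispose of the classical part. Reducing $\widehat f=\sum_i f_i\hbar^i$ modulo $\hbar$, the image of $\ZA(U^\wedge_\hbar(\g))$ reduces to the image of $S(\g)^\g$, so $f_0\in\K[X^0]^G$ is integral over $\psi^*(S(\g)^\g)\subset\K[X]^G$; normality of $X$ then already forces $f_0\in A=\K[C_{G,X}]$. Moreover $\psi^*(\K[\g^*]^\g)$ Poisson-commutes with $\K[X]^G$ because $\{H_\xi,\cdot\}=L_{\xi_*}(\cdot)$ annihilates invariants, and in any case $A=\widetilde\psi^*(\K[C_{G,X}])\subseteq\widetilde\psi^*(\K[\im\widetilde\psi])=\ZP(\K[X]^G)$ by Proposition \ref{Prop:1.11}. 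Thus $f_0$ is an honest Poisson-central function on all of $X$, and it remains to control the higher coefficients, which is where the local model enters.

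The main work is the structural analysis at a fixed $x$ as above. By Proposition \ref{Prop:1.12} there is a Hamiltonian isomorphism $X^\wedge_{Gx}\cong (X_0\times V)^\wedge_{Gx'}$ with $X_0$ coisotropic and $V$ a symplectic vector space, and by Proposition \ref{Prop:1.5} (after checking that the characteristic classes match) the Fedosov star-product is carried to the Fedosov product on $X_0\times V$ built from a product connection, whose $V$-directions furnish exactly a Moyal--Weyl factor. Passing to $G$-invariants and using that coisotropy of $X_0$ makes its invariant functions Poisson-commute, I expect to identify the completed invariant quantum algebra with $\hat{W}_V(B)$ for a commutative domain $B$ assembled from the invariants of $X_0$, arranged so that the image of $\ZA(U^\wedge_\hbar(\g))$ lands inside the central subalgebra $B[[\hbar]]$. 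Granting this, integrality of $\widehat f$ over the center makes it integral over $B[[\hbar]]$, so Lemma \ref{Lem:2.4} places $\widehat f$ in $B[[\hbar]]$; hence locally $\widehat f$ commutes with $\hat{W}_V(B)$ and, crucially, is regular across the divisor $\{g=0\}$, since $g$ lives in the commutative factor and normality of $B$ upgrades membership in $B[1/g][[\hbar]]$ to membership in $B[[\hbar]]$. Feeding this back through the first paragraph yields $\widehat f\in\ZA(\K[X][[\hbar]]^G)$.

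The step I expect to be the main obstacle is precisely this identification of the completed invariant quantum algebra with a Moyal--Weyl algebra $\hat{W}_V(B)$ over a commutative base whose power series swallow the image of the center: it is what converts the abstract integrality hypothesis into the concrete input required by Lemma \ref{Lem:2.4}, and it demands both the characteristic-class compatibility needed to invoke Proposition \ref{Prop:1.5} and the extraction of genuine \emph{quantum} commutativity of the $X_0$-invariants out of their mere Poisson commutativity. The subsidiary delicate point is that $X^0$ omits the codimension-one locus $\{g=0\}$, so ordinary Hartogs extension fails there; crossing that divisor is exactly the role played by integrality, i.e. by Lemma \ref{Lem:2.4} together with normality, rather than by the codimension $\geqslant 2$ argument that handles the non-closed-orbit locus.
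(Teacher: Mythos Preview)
Your plan is essentially the paper's own proof: pass to completions at closed orbits over $Z^0$, use the model $X_0\times V$ from Proposition~\ref{Prop:1.12}, transport the star-product via Proposition~\ref{Prop:1.5}, upgrade Poisson commutativity of the coisotropic invariants to quantum commutativity (using that the star-product is differential), identify the completed invariant quantum algebra with a Moyal--Weyl algebra over a commutative base, and invoke Lemma~\ref{Lem:2.4} followed by integrality/normality to cross $\{g=0\}$. Two small refinements you will meet when executing this, which the paper carries out explicitly: after taking $G$-invariants the symplectic factor is $V^H$ rather than $V$, and to make the ``normality'' step clean the paper further restricts to points $y$ that are smooth on the divisor $\{g=0\}$, so that the localized base is literally $\K[[x_1,\ldots,x_n]][x_1^{-1}]$.
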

\begin{proof}
It is enough to check that $\widehat{f}\in \ZA(\K[X^0][[\hbar]]^G)$
and that $\widehat{f}$ is defined in all points  $y\in(X\quo
G)^{reg}$ such that \begin{enumerate}\item $y$ satisfies condition
$(\clubsuit)$ of Proposition \ref{Prop:1.12},\item either $g(y)\neq
0$ or $y$ is a smooth point of the zero locus of $g$.\end{enumerate}
Let $X_0,V,X',x',\rho$ be such as in ($\clubsuit$). We may assume
that $\rho(x)=x'$ and identify $Gx\cong Gx'$.

Let $\lambda$ be the characteristic class used in the construction
of $*$. Choose a representative $\overline{\lambda}$ of $\lambda$ in
$\Omega^2(X)[[\hbar]]$. Set
$\overline{\lambda}'=\pi^*(\iota^*(\overline{\lambda}))$, where
$\iota$ denotes the inclusion $Gx\hookrightarrow X$ and $\pi$ the
projection $X'\twoheadrightarrow Gx$. Finally, let $\lambda'$ denote
the class of $\overline{\lambda}'$. It is easy to see that
$i_{\xi_*}\overline{\lambda}'$ is an exact form.  Construct the
star-product on $\K[X'][[\hbar]]$ w.r.t. some $G$-invariant
connection and the characteristic class $\lambda'$. By Theorem
\ref{Thm:2.4}, $X',X_0$ have $*$-Hamiltonian structures. Then $\rho$
induces an isomorphisms $\iota:\K[X]^\wedge_{Gx}\rightarrow
\K[X']^\wedge_{Gx'}, \K[X]^\wedge_{Gx}[g^{-1}]\rightarrow
\K[X']^\wedge_{Gx'}[\iota(g)^{-1}]$. The characteristic classes of
the star-products on $\K[X]_{Gx}^\wedge[[\hbar]],
\K[X']_{Gx}^\wedge[[\hbar]]$ coincide, so there is an isomorphism
$\iota_\hbar:=(id+\sum_{i=1}^\infty
T_i\hbar^i)\circ\iota:\K[X]_{Gx}^\wedge[[\hbar]]\rightarrow
\K[X']^\wedge_{Gx'}[[\hbar]]$ of quantum algebras, where all $T_i$
are differential operators, see Proposition \ref{Prop:1.5}. So
$\iota_\hbar$ is extended to the isomorphism $\iota_\hbar:
\K[X]^\wedge_{Gx}[g^{-1}][[\hbar]]\rightarrow
\K[X']^\wedge_{Gx'}[\iota(g)^{-1}][[\hbar]]$ of quantum algebras.
Taking $G$-invariants, we get the isomorphism $\K[X\quo
G]^\wedge_{y}[g^{-1}][[\hbar]]\cong \K[X'\quo
G]^\wedge_{y}[\iota(g)^{-1}][[\hbar]]$. From construction it is
clear that $\iota(g)\in \K[X_0]^G\hookrightarrow \K[X'\quo
G]^\wedge_{y}[\iota(g)^{-1}]=\K[X_0\quo
G]^\wedge_{y}[[V^{H}]][\iota(g)^{-1}]$.

Set $B:=\K[X_0\quo G]^\wedge_{y}, \widetilde{B}:=B[\iota(g)^{-1}]$.
Let us check that the structures of the classical and the quantum
algebras on $\widetilde{B}[[\hbar]]$ are isomorphic. The algebra $B$
is isomorphic to the formal power series algebra
$\K[[x_1,\ldots,x_n]]$. By the assumptions on $y$, we may assume
that either $g(y)\neq 0$, and then $\widetilde{B}=B$, or
$\widetilde{B}=\K[[x_1,\ldots,x_n]][x_1^{-1}]$. Now let us show that
the quantum algebra $B[[\hbar]]$ is commutative. Indeed, $\K[X_0]^G$
is algebraic over the image of $S(\g)^\g$, for $X_0$ is coisotropic.
From that and the observation that the star-product on
$\K[X_0]^G[[\hbar]]$ is differential we see that the quantum algebra
$\K[X_0\quo G][[\hbar]]$ is commutative. On the other hand,
$\K[X_0\quo G]$ is dense in $B$ and we are done. Since the
star-product on $B[[\hbar]]$ is differential, we see that the
quantum algebra $\widetilde{B}[[\hbar]]$ is also commutative. So
both classical and quantum algebras $B[[\hbar]]$ are commutative
complete local algebras. Therefore there are $q_1,\ldots,q_n\in
B[[\hbar]]$ such that the map $\varphi:x_i\rightarrow x_i+\hbar q_i$
defines the homomorphism from the classical algebra to the quantum
one. Since $\widetilde{B}[[\hbar]]$ is naturally identified with
$B[\varphi(x_1)^{-1}][[\hbar]]$, we see that $\varphi$ is extended
to the isomorphism $\widetilde{B}[[\hbar]]\rightarrow
\widetilde{B}[[\hbar]]$.

So we have the isomorphism of quantum algebras $$\K[X'\quo
G]^\wedge_{y}[\iota(g)^{-1}][[\hbar]]\cong
\widetilde{B}[[\hbar]]\widehat{\otimes}_{\K[[\hbar]]}\hat{W}_{V^H}(\K)\cong
\hat{W}_{V^H}(\widetilde{B}).$$ By Lemma \ref{Lem:2.4},
$\iota_\hbar(\widehat{f})\in \widetilde{B}[[\hbar]]$. Since
$\iota_\hbar(\widehat{f})$ is integral even over the image of
$\ZA(U^\wedge_\hbar(\g))$ in $B[[\hbar]]$, we easily get
$\iota_\hbar(\widehat{f})\in B[[\hbar]]$. It follows that
$\widehat{f}$ is defined in $y$ and commutes with the whole algebra
$\K[X\quo G]^\wedge_y$. This completes the proof.
\end{proof}

\begin{proof}[Proof of Theorem \ref{Thm:2.1}]



By the graded version of the Noether normalization theorem, there are algebraically
independent homogeneous elements $g_1,\ldots,g_k\in \mu^*(S(\g)^\g)$ that are algebraically
independent such that $\mu^*(S(\g)^\g)$ is  finite over $B_0:=\K[g_1,\ldots, g_k]$.

Let $B$ be a subalgebra of $A$ containing $B_0$. Suppose there
is a continuous homomorphism $\iota:B[[\hbar]]\rightarrow A_\hbar$
of $\K[[\hbar]]$-algebras such that the diagram analogous to that of
the theorem (with $B$ instead of $A$) is commutative. Automatically,
the image of $\iota$ is closed and $\iota$ is a topological
isomorphism onto its image.

Clearly, $B=B_0$ satisfies the above conditions.
 By definition, $A$ is a finite $B_0$-module, so
 any ascending chain of subalgebras between
$B_0$ and $A$ is finite. So we assume that $B$ is a maximal
subalgebra of $A$ satisfying the conditions of the previous
paragraph.  Theorem \ref{Thm:2.1} is equivalent to the equality
$A=B$. Assume that $A\neq B$.

Choose some homogeneous elements $a_0,\ldots,a_{n-1}\in B,a_0\neq 0$
and $f\in A$. Set $P(t)=\sum_{i=0}^n a_it^i, a_n:=1$. Suppose $P(f)=0$ and
$Q(f)\neq 0$ for any monic polynomial $Q\in B[x]$ with $\deg Q<\deg
P$.
 Set $\widehat{a}_{i}=\iota(a_i), i=\overline{0,n-1}$. We are going
to show that there exists a  homogeneous element
$\widehat{f}=\sum_{i=0}^\infty f_i\hbar^i\in \K[X][[\hbar]]^G$ such
that $f_0=f$ and
\begin{itemize}
\item[(*)]$\widehat{f}^{*n}+\widehat{a}_{n-1}*\widehat{f}^{*(n-1)}+\ldots+\widehat{a}_0=0$.
\end{itemize}

Suppose  we have already constructed such $\widehat{f}$. Let
$\widetilde{B}$ denote the subalgebra in $A$ generated by $B$ and
$f$. By Proposition \ref{Prop:2.3}, $\widehat{f}$ is a central
element of $\K[X][[\hbar]]^G$. Let $\widetilde{\iota}$ denote the
continuous $\K[[\hbar]]$-algebra homomorphism
$\widetilde{B}[[\hbar]]\rightarrow \K[X][[\hbar]]^G$ defined by
$\widetilde{\iota}(b)=\iota(b),b\in
B,\widetilde{\iota}(f)=\widehat{f}$. By construction,
$\widetilde{\iota}$ is well-defined and satisfies the assumptions of
the first paragraph of the proof.

At first, we show that there is an open $G$-stable subvariety $X^1$
such that there is an element $\widehat{f}\in \K[X^1][[\hbar]]^G$
satisfying (*). Namely, for $X^1$ we take the set of all points
$x\in X$ such that $\frac{dP}{dt}(f)$ is nonzero in $x$. Since $X^1$
contains $X^0$ of the form indicated in Proposition \ref{Prop:2.3},
we will automatically  get $\widehat{f}\in \K[X][[\hbar]]^G$.

We will construct such $\widehat{f}$ recursively. Clearly, if
$\widehat{f}$ satisfies (*) iff $\widehat{f}_{(m)}:=\sum_{i=0}^m
f_i\hbar^i$ satisfies
\begin{itemize}
\item[$(*_m)$]
$\widehat{f}_{(m)}^{*n}+\widehat{a}_{n-1}*\widehat{f}_{(m)}^{*(n-1)}+\ldots+\widehat{a}_0\in\hbar^{m+1}\K[X][[\hbar]]$
\end{itemize}
for any $m\in \N$.

Suppose we have already found $f_1,\ldots,f_m\in \K[X^1]^G$ such
that $\widehat{f}_{(m)}$ satisfies $(*_m)$. Let us check that  there
is a unique element $\widehat{f}_{(m+1)}\in \K[X^1]^G$ such that
$\widehat{f}_{m+1}$ satisfies $(*_{m+1})$. This follows from the
observation that the coefficient of $\hbar^{m+1}$ in the l.h.s. of
$(*_{m+1})$ is equal to $\frac{dP}{dt}(f)f_{m+1}-Q$, where $Q$
depends only on $f_0,\ldots,f_m$. So $f_{m+1}$ is constructed. By
construction, $f_{m+1}$ is homogeneous, the degree of
$f_{m+1}\hbar^{m+1}$ coincides with that of $f$ and
$\widehat{f}_{(m+1)}$ satisfies $(*_{m+1})$.
\end{proof}

\begin{proof}[Proof of Theorem \ref{Thm:1.1}]
Let us construct an  algebra homomorphism $\ZP(\K[X]^G)\rightarrow
\ZA(\K[X][[\hbar]]^G)$ that is a section of
$\ZA(\K[X][[\hbar]]^G)\rightarrow \ZP(\K[X]^G), \sum_{i=0}^\infty
f_i\hbar^i\mapsto f_0$. Let $\widehat{\iota}$ denote an isomorphism
$A[[\hbar]]\rightarrow A_\hbar$ constructed in Theorem
\ref{Thm:2.1}.

By Proposition \ref{Prop:1.11},
$\ZP(\K[X]^G)=\widetilde{\psi}^*(\K[\im\widetilde{\psi}])$. So for
any $f\in \ZP(\K[X]^G)$ there exist elements $f_1,\ldots,f_k,
g_1,\ldots,g_k\in A$ such that $fg_i=f_i$ and for any $y\in
\im\widetilde{\psi}$ there is $i$ with $g_i(y)\neq 0$. Then
$\frac{\widehat{\iota}(f_i)}{\widehat{\iota}(g_i)}=
\frac{\widehat{\iota}(f_j)}{\widehat{\iota}(g_j)}$. By construction
$\iota(g_i)-g_i\in \hbar A_\hbar$, so if $g_i(y)\neq 0$, then
$\frac{\widehat{\iota}(f_i)}{\widehat{\iota}(g_i)}$ is defined in
$y$. So the fractions
$\frac{\widehat{\iota}(f_i)}{\widehat{\iota}(g_i)}$ are glued
together into an element $\widehat{f}\in \K[X][[\hbar]]^G$. Since
$\widehat{\iota}(f_i),\widehat{\iota}(g_i)\in
\ZA(\K[X][[\hbar]]^G)$, we get $\widehat{f}\in
\ZA(\K[X][[\hbar]]^G)$. Set $\widehat{\iota}(f):=\widehat{f}$. It is
clear from the construction that the map
$\widehat{\iota}:\ZP(\K[X]^G)\rightarrow \ZA(\K[X][[\hbar]]^G)$ has
the desired properties.

Let us lift $\widehat{\iota}:\ZP(\K[X]^G)\rightarrow
\ZA(\K[X][[\hbar]]^G)$ to the continuous $\K[[\hbar]]$-algebra
homomorphism $\iota_\hbar:\ZP(\K[X]^G)[[\hbar]]\rightarrow
\ZA(\K[X][[\hbar]]^G)$. By the construction of $\iota$, the
homomorphism $\iota_\hbar$ is injective and makes the right triangle
of the theorem diagram  commutative. From the commutativity of the
triangle one easily deduces that this homomorphism is also
surjective. It follows from the properties of the isomorphism
$A[[\hbar]]\rightarrow A_\hbar$ that the left square is also
commutative.
\end{proof}

\section{Some special cases}\label{SECTION_Appl}
At first, let $X=V$ be a symplectic vector space with  constant
symplectic form and Moyal-Weyl star-product $*$. Suppose $G$ is a
connected reductive group acting on $V$ by linear
symplectomorphisms. This action is Hamiltonian with quadratic
hamiltonians.  Equip $V$ with the action of $\K^\times$ given by
$(t,v)\mapsto t^{-1}v$. Setting $t.\hbar=t^2\hbar$ we make $*$ a
homogeneous star-product. The quantum hamiltonians are homogeneous
of degree 2. By \cite{alg_hamil}, Theorem 1.2.7,
$\widetilde{\psi}_{G,V}$ is surjective whence
$\ZP(\K[V]^G)=\K[C_{G,V}]$. Further, (\cite{fibers}, Corollary 3.12.
or \cite{Knop6}, Section 1) $C_{G,V}$ is an affine space. By Theorem
\ref{Thm:2.1}, there is a $\K^\times$-equivariant isomorphism
$\K[C_{G,V}][[\hbar]]\rightarrow \ZA(\K[V][[\hbar]]^G)$. The
$\K^\times$-finite parts of these two algebras coincide with
$\K[C_{G,V}][\hbar],\ZA(\K[V][\hbar]^G)$. Taking quotients of these
algebras by the ideal generated by $\hbar-1$, we get an isomorphism
$\K[C_{G,V}]\rightarrow \ZA(W(V)^G)$, where $W(V)$ is the Weyl
algebra of $V$. In particular, $\ZA(W(V)^G)$ is a polynomial
algebra.

Now we consider the case when $X=T^*X_0$ for some smooth affine
$G$-variety $X_0$. We consider the action $\K^\times:X$ given by
$t.(x_0,\alpha)=(x_0,t^{-1}\alpha), t\in\K^\times, x_0\in
X_0,\alpha\in T^*_{x_0}X_0$. Choose a $G\times \K^\times$-invariant
symplectic connection $\nabla$ on $X$. Construct the star-product on
$X$ by means of $\nabla$ and the zero characteristic class. Setting
$t.\hbar=t\hbar$, we make $*$ a homogeneous star-product. Again, the
morphism $\widetilde{\psi}$ is surjective and $C_{G,X}$ is a
polynomial algebra. The latter follows from  results of
\cite{Knop1}. The quantum algebra $\K[X][[\hbar]]$ is
$G\times\K^\times$-equivariantly isomorphic to the {\it completed
homogeneous} algebra  $\D^\wedge_\hbar(X_0)$ of differential
operators on $X_0$, which is defined as follows.  Let
$\operatorname{Der}(X_0)$ denotes the space of all vector fields on
$X_0$. By the {\it homogeneous} algebra of differential operators on
$X_0$ we mean the quotient $\D_\hbar(X_0)$ of $T(\K[X_0]\oplus
\operatorname{Der}(X_0))[\hbar]$ by the relations $f\otimes g=fg,
\xi\otimes f-f\otimes \xi=\hbar \xi.f,
\xi\otimes\eta-\eta\otimes\xi=\hbar [\xi,\eta], f,g\in \K[X_0],
\xi,\eta\in \operatorname{Der}(X_0)$. We have a natural action
$G\times\K^\times:\D_\hbar(X_0)$, the group $\K^\times$ acts as
follows: $t.f=f, t.\xi=t\xi, t.\hbar=\hbar, f\in \K[X_0],\xi\in
\operatorname{Der}(X_0)$. By definition, $\D_\hbar^\wedge(X_0)$ is
the completion of $\D_\hbar(X_0)$ in the $\hbar$-adic topology.

For a particular choice of $\nabla$ an isomorphism
$\K[X][[\hbar]]\rightarrow \D_\hbar^\wedge(X_0)$ was constructed in
\cite{BNW}\footnote{ADDED IN PROOF. In fact, the quantization $\D^\wedge_\hbar(X_0)$ of $\K[X]$
corresponds not to the zero characteristic class but to the half of the Chern
class of the canonical bundle. This is implicitly contained in [BNW].
However, the precise value of the correspondent
characteristic class is not important for our argument. Indeed, any quantization
of $\K[X]$ is isomorphic to a Fedosov one with {\it some} characteristic class.
Theorem 1.1 works for an arbitrary characteristic class as long as the comoment
map can be quantized. The last condition definitely holds for $\D_\hbar^\wedge(X_0)$.} and the algebra $\K[X][[\hbar]]$ does not depend (up to a
$G\times\K^\times$-equivariant isomorphism) on the choice of
$\nabla$. So we get an isomorphism $\K[C_{G,X}]\cong
\ZA(\D(X_0)^G)$. This is a weak version of the main theorem of
\cite{Knop4}.

\bigskip
{\Small Department of Mathematics, Massachusetts Institute of
Technology, 77 Massachusetts Avenue, Cambridge, MA 02139, USA.

\noindent E-mail address: ivanlosev@math.mit.edu}
\end{document}